\newcommand{\losemi}{{\otimes \kern -.78em \ltimes}}
\newcommand{\rosemi}{{\otimes \kern -.78em \rtimes}}
\newcommand{\Hom}{\ensuremath{\operatorname{Hom}}}
\newcommand{\Ext}{\operatorname{Ext}}
\newcommand{\la}{\lambda}
\newcommand{\St}{\operatorname{St}}
\newcommand{\leqnomode}{\tagsleft@true}
\newcommand{\reqnomode}{\tagsleft@false}
\newtheorem{theorem}{Theorem}[subsection]
\let\c@fact\c@theorem\makeatother
\let\c@note\c@theorem\makeatother
\let\c@lemma\c@theorem\makeatother
\let\c@lemma\c@theorem\makeatother
\let\c@alg\c@theorem\makeatother
\let\c@prop\c@theorem\makeatother
\let\c@conj\c@theorem\makeatother
\let\c@cor\c@theorem\makeatother
\let\c@defn\c@theorem\makeatother
\theoremstyle{definition}
\let\c@remark\c@theorem\makeatother
\let\c@example\c@theorem\makeatother
\numberwithin{equation}{subsection}
\crefname{theorem}{Theorem}{Theorems}
\crefname{fact}{Fact}{Facts}
\crefname{note}{Note}{Notes}
\crefname{lemma}{Lemma}{Lemmas}
\crefname{alg}{Algorithm}{Algorithms}
\crefname{remark}{Remark}{Remarks}
\crefname{example}{Example}{Examples}
\crefname{prop}{Proposition}{Propositions}
\crefname{conj}{Conjecture}{Conjectures}
\crefname{cor}{Corollary}{Corollaries}
\crefname{defn}{Definition}{Definitions}
\crefname{equation}{\!\!}{\!\!} 
\newcounter{listequation}
\begin{document}

\title{On character formulas for simple and tilting modules}

\author{Paul Sobaje}
\address{Department of Mathematics \\
          Georgia Southern University}
\email{psobaje@georgiasouthern.edu}
\date{\today}
\subjclass[2010]{Primary 20G05}

\begin{abstract}
We show that the characters of tilting modules can be used, in a concrete and explicit way, to obtain the simple characters of a connected reductive algebraic group $G$ over an algebraically closed field $\Bbbk$ of characteristic $p$, for all $p$.  Thus, once a formula for the characters of the indecomposable tilting $G$-modules has been found, a formula for the simple modules has been also.  An immediate implication is that the work of Achar, Makisumi, Riche, and Williamson in \cite{AMRW} provides a character formula for simple $G$-modules when $p>h$, the Coxeter number of $G$.
\end{abstract}

\maketitle
\vspace{-0.2in}

\section{Introduction}

Pioneering work by Riche and Williamson \cite{RW}, completed by Achar, Makisumi, Riche, and Williamson \cite{AMRW}, provides a character formula for indecomposable tilting modules of a connected reductive algebraic group $G$ over an algebraically closed field $\Bbbk$ of positive characteristic $p$, assuming that $p > h$, the Coxeter number of $G$.  The authors of \cite{AMRW} note that the characters of tilting modules take on additional significance as they can be used to compute the characters of the simple $G$-modules when $p \ge 2h-2$, thanks to an argument by Andersen \cite{A}.  Kildetoft has given an algorithm that can be implemented to compute the characters of simple modules from those of tilting modules \cite{K}, and his method does not require a bound on $p$, but only that Donkin's Tilting Module Conjecture is true (which is known to be the case when $p \ge 2h-2$ \cite{D}).  It has recently been shown, however, that this conjecture does not hold in all characteristics (specifically, it fails for the simple algebraic group of type $G_2$ in characteristic $2$ \cite{BNPS}).

In this article we give a concrete formula for the characters of the simple $G$-modules in terms of the characters of indecomposable tilting modules.  Our method works for all $p$.   Consequently, as soon as a formula for characters of tilting modules has been obtained for a given $G$ and $\Bbbk$, a character formula for the simple modules has been found also \footnote{Here we follow the standard use of the term ``character formula,'' which seems to include any recursive algorithm for obtaining characters that involves computing Kazhdan-Lusztig polynomials, $p$-Kazhdan-Lusztig polynomials, and performing basic operations in the group ring $\mathbb{Z}[X(T)]$.}.  In particular, by the work in \cite{AMRW} we have a character formula for simple $G$-modules when $p>h$.

The key step in our method is a new proof of a theorem by Parshall and Scott \cite{PS} that says that every projective indecomposable $G_1$-module has a ``stable lifting'' to a tilting module for $G$.  This proof yields an upper bound on how high one must go to find the stabilizing range, which provides an explicit, finite set of tilting modules whose characters can be used to compute the characters of the simple $G$-modules.  From here, one immediately sees that the tilting characters yield the characters of the $G_1T$-PIMs (moreover, this observation shows that knowledge of the characters of tilting modules is sufficient to determine when Donkin's Tilting Module Conjecture holds).  One then deduces the characters of the simple modules recursively from those of the $G_1T$-PIMs via Brauer-Humphreys reciprocity.

Two clarifying remarks about our work should be made at this point.  First, the formula for tilting characters in \cite{AMRW} involves recursively computing $p$-Kazhdan-Lusztig polynomials, which the authors note is ``much more involved than the original Kazhdan-Lusztig algorithm.''  Moreover, our method uses characters of indecomposable tilting modules that have highest weight above the $r$th Steinberg weight, for an explicit $r$ depending on $G$ and $p$.  In view of this, the results in this paper are at present primarily useful from a conceptual standpoint, rather than as a practical way to compute the characters of simple modules.  

Second, for certain reductive groups, an explicit connection of a different sort between tilting modules and simple modules has been known since Donkin's original paper on tilting modules for algebraic groups \cite{D}.  In the final section we explain how these results differ from those in this paper.

\subsection{Acknowledgements} I wish to thank Cornelius Pillen for communicating a significantly better proof of Theorem \ref{T:bound}, and Henning H. Andersen for the nice argument on how to use Brauer-Humphreys reciprocity to obtain simple characters.  I am grateful to Henning and to Simon Riche for careful readings and detailed comments on previous versions of this paper that have greatly improved the exposition, and to Stephen Donkin for helpful discussions about Ringel duality for truncated categories of $G$-modules.  Finally, I am indebted to Christopher P. Bendel, Daniel K. Nakano, and Cornelius Pillen for ongoing fruitful collaboration on problems pertaining to tilting modules, without which the connections in this paper would likely not have been made.

\section{Notation and Conventions}
All notation not detailed here, or later on, follows that in \cite{rags}.  We assume that $G$ is simple and simply connected (one extends these results to connected reductive groups in a standard way).  We fix a maximal torus and negative Borel subgroup $T \le B \le G$.  The character group of $T$ is $X(T)$, with $X(T)_+$ and $X_r(T)$ the sets of dominant and $p^r$-restricted weights respectively.  The group ring of characters is $\mathbb{Z}[X(T)]$.  For each $\lambda \in X(T)_+$ there is a simple module $L(\la)$, a Weyl module $\Delta(\la)$, an induced module $\nabla(\la)$, and an indecomposable tilting module $T(\la)$.  The half-sum of the positive roots is $\rho$, $\alpha_0$ is the maximal short root, and $w_0$ is the longest element of the Weyl group $W$.  The $r$th Steinberg module is the module $\St_r = L((p^r-1)\rho)$.  We denote the character of $\St_r$ by $\chi((p^r-1)\rho)$.

The $r$th Frobenius kernel of $G$ is denoted $G_r$, while $G_rT$ is the subgroup scheme of $G$ generated by $G_r$ and $T$.  For each $\la \in X(T)$ there is a simple $G_rT$-module $\widehat{L}_r(\la)$ having projective cover $\widehat{Q}_r(\la)$.  Over $G_r$ these modules are denoted as $L_r(\la)$ and $Q_r(\la)$ respectively.  The module $\widehat{Z}_r(\lambda)$ is the $r$th baby Verma module of highest weight $\lambda$.  Its character is a translate of $\chi((p^r-1)\rho)$, as
\begin{equation}\label{bVchar}
\textup{ch}(\widehat{Z}_r(\lambda))=\chi((p^r-1)\rho)e(\mu-(p^r-1)\rho).
\end{equation}

\section{The Parshall-Scott stable tilting module theorem}

In \cite{PS}, Parshall and Scott proved that a stable version of Donkin's tilting module conjecture always holds.

\begin{theorem}\cite[Theorem A.2]{PS}\label{T:P-S}
For $\Bbbk$ and $G$ fixed, there exists an integer $N \ge 0$ such that for each $\lambda \in X_1(T)$ and $n \ge N$ there is an isomorphism of $G_1T$-modules
$$T((2(p-1)\rho+w_0\lambda)+p(p^n-1)\rho) \cong \widehat{Q}_1(\lambda) \otimes {\St_n}^{(1)}.$$
\end{theorem}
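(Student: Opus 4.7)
The plan is to show that for $n$ large enough, both sides of the claimed isomorphism are projective $G_{1}T$-modules with the same decomposition into indecomposable projective summands.

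First I would decompose the right-hand side explicitly. Since $\St_{n}^{(1)}$ is a Frobenius twist, $G_{1}$ acts trivially on it, so as a $G_{1}T$-module it is a direct sum of one-dimensional $T$-weight spaces $\Bbbk_{p\mu}$ with multiplicities $m_{\mu}:=\dim L((p^{n}-1)\rho)_{\mu}$. Combining this with the identification $\widehat{Q}_{1}(\lambda+p\mu)\cong \widehat{Q}_{1}(\lambda)\otimes\Bbbk_{p\mu}$ yields
\[
\widehat{Q}_{1}(\lambda)\otimes\St_{n}^{(1)}\;\cong\;\bigoplus_{\mu}\widehat{Q}_{1}(\lambda+p\mu)^{m_{\mu}},
\]
which is in particular $G_{1}T$-projective. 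I would then verify that $T(\nu)|_{G_{1}T}$, with $\nu=2(p-1)\rho+w_{0}\lambda+p(p^{n}-1)\rho$, is also $G_{1}T$-projective once $n$ is past an explicit threshold. The input here is the classical result of Donkin that a tilting module whose highest weight is sufficiently dominant — specifically, above a translate of $(p^{n}-1)\rho$ — is projective as a $G_{n}$-module and hence (by linear reductivity of $T$) as a $G_{1}T$-module. Inspecting the dominance inequality for $\nu$ gives an explicit bound $N$ depending only on $p$ and $\lambda$.

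Since both sides are now $G_{1}T$-projective, each decomposes as a direct sum of $G_{1}T$-PIMs whose multiplicities are determined by the multiplicities of the simple modules in the socle (or head). Matching the two modules therefore reduces to showing
\[
\bigl[T(\nu)|_{G_{1}T}:\widehat{Q}_{1}(\sigma)\bigr]=\begin{cases} m_{\mu}, & \sigma=\lambda+p\mu,\\ 0, & \text{otherwise}.\end{cases}
\]
I would compute the left-hand side using the $\Delta$-filtration of $T(\nu)$ combined with Brauer--Humphreys reciprocity for $G_{1}T$, which turns the computation into a tally of baby-Verma composition factors of each $\Delta(\tau)|_{G_{1}T}$, tractable via the character identity (\ref{bVchar}).

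The main obstacle is this last matching step: the $\Delta$-filtration of $T(\nu)$ has many factors, and one must argue that in the stable range all contributions outside the Steinberg weight pattern cancel, leaving exactly $\chi((p^{n}-1)\rho)\cdot \textup{ch}(\widehat{Q}_{1}(\lambda))$. A perhaps cleaner inductive alternative is to set $\nu_{n}:=2(p-1)\rho+w_{0}\lambda+p(p^{n}-1)\rho$ and use a tilting tensor-product formula $T(\nu_{n})\otimes \St_{1}^{(n+1)}\cong T(\nu_{n+1})$ (valid once $T(\nu_{n})$ is $G_{n+1}$-projective) to propagate the isomorphism via the Steinberg decomposition $\St_{n+1}^{(1)}\cong \St_{n}^{(1)}\otimes\St_{1}^{(n+1)}$, reducing the theorem to one explicit base case.
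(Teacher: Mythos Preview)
Your reduction to matching PIM multiplicities is sound in principle, but the step you yourself flag as ``the main obstacle'' is in fact the entire content of the theorem, and you have not given a mechanism for resolving it. Computing $[T(\nu)|_{G_1T}:\widehat{Q}_1(\sigma)]$ via the $\Delta$-filtration of $T(\nu)$ presupposes knowledge of the multiplicities $(T(\nu):\Delta(\tau))$, which are precisely the unknown tilting characters; there is no a priori reason for the contributions outside the Steinberg pattern to cancel, and nothing in your sketch forces this. (Incidentally, $T(\nu)$ is $G_1$-projective for \emph{every} $n\ge 0$, since $\nu-(p-1)\rho$ is always dominant, so no bound on $n$ enters at that step.) Your inductive alternative has the same circularity. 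The tensor identity $T(\nu_n)\otimes\St_1^{(n+1)}\cong T(\nu_{n+1})$ is not a consequence of $G_{n+1}$-projectivity alone: one needs $T(\nu_n)$ to remain indecomposable over $G_{n+1}T$, which is a form of Donkin's conjecture and is essentially what is at stake. And even granting the induction step, the ``one explicit base case'' you would still need is again the full statement for some fixed $n_0$.

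The paper's argument (given as the proof of Theorem~\ref{T:bound}) avoids tilting multiplicities entirely. The bound on $n$ is used not for projectivity but for an $\Ext^1$-vanishing: for $n$ large, $\St_n\otimes L(\lambda)$ becomes injective in the truncated category of $G$-modules with composition factors below $(p^n-1)\rho+\lambda$. This forces every indecomposable $G$-summand to have simple $G$-socle, and a multiplicity-one Hom calculation then identifies one such summand simultaneously with $T((p^n-1)\rho+\lambda)$ (via \cite{P}) and with $\widehat{Q}_n((p^n-1)\rho+w_0\lambda)$ over $G_nT$. A second socle argument at the $G_1$-level, again exploiting multiplicity one, peels off the factor $\St_{n-1}^{(1)}$. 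Simplicity of socles thus replaces any need to know $\Delta$-filtration data.
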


As the authors of \cite{PS} note, the smallest possible choice, $N=0$, holds exactly when Donkin's Tilting Module Conjecture holds.  As stated above, this does not hold in general \cite{BNPS}.

It is clear that one can compute the characters of the $G_1T$-PIMs from the characters of the tilting modules appearing in this theorem, provided that one also knows a number $n$ that is within this stability range (so that an actual computation can be made).  The next theorem gives an upper bound for $N$, though the smallest value of $N$ could be less.  We thank C. Pillen for communicating to us the following argument, which shortens our original proof of this bound.

\begin{theorem}\label{T:bound}
Any $N$ such that
$$p^N + p^{N-1} + \cdots + 1 = \frac{p^{N+1}-1}{p-1} \ge h-1$$
will work as a bound in the previous theorem.  In particular if $p \ge h-2$, then $N=1$ suffices.
\end{theorem}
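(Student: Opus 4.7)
My plan is to reduce the claimed isomorphism to a character equality between $G_1T$-modules, then to use induction on $n$ together with the bound. The ``in particular'' clause is immediate: $p \ge h - 2$ yields $1 + p = \tfrac{p^2-1}{p-1} \ge h - 1$, so $N = 1$ satisfies the hypothesis.

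First I would decompose the right-hand side. As a $G_1$-module $\St_n^{(1)}$ is trivial, and as a $T$-module it is the sum of one-dimensional weight spaces with weights $p\nu$ for $\nu$ a weight of $\St_n$ (with multiplicity), so since tensoring $\widehat{Q}_1(\lambda)$ by the one-dimensional $G_1T$-module of weight $p\nu$ canonically yields $\widehat{Q}_1(\lambda + p\nu)$, we obtain
$$\widehat{Q}_1(\lambda) \otimes \St_n^{(1)} \;\cong\; \bigoplus_{\nu} m_{\nu}\,\widehat{Q}_1(\lambda + p\nu)$$
as $G_1T$-modules, where $m_\nu = \dim(\St_n)_\nu$ and $\nu$ ranges over weights of $\St_n$. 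On the left, the weight $(2(p-1)\rho + w_0\lambda) + p(p^n-1)\rho$ pairs with each simple coroot to at least $p^{n+1}-1$ (using $\langle w_0\lambda,\alpha^\vee\rangle \ge -(p-1)$ for $\lambda \in X_1(T)$), so by Donkin's criterion the corresponding tilting module is $G_1T$-injective and thus likewise a direct sum of $G_1T$-PIMs. The theorem is therefore equivalent to the character identity
$$\textup{ch}\bigl(T((2(p-1)\rho + w_0\lambda) + p(p^n-1)\rho)\bigr) = \textup{ch}(\widehat{Q}_1(\lambda)) \cdot \chi((p^n-1)\rho)^{(1)}.$$

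With this identity as the goal I would induct on $n \ge N$, propagating via Steinberg's tensor product decomposition $\St_{n+1}^{(1)} \cong \St_n^{(1)} \otimes \St_1^{(n+1)}$. The step from $n$ to $n+1$ reduces to showing $T(\mu + p(p^{n+1}-1)\rho) \cong T(\mu + p(p^n-1)\rho) \otimes \St_1^{(n+1)}$ as $G$-modules, where $\mu = 2(p-1)\rho + w_0\lambda$; both sides are tilting with the same highest weight, so one only needs the right-hand side to be indecomposable. This is where the bound $\tfrac{p^{N+1}-1}{p-1} \ge h - 1$ enters: using $\langle\rho,\alpha_0^\vee\rangle = h - 1$ for the highest short coroot, it guarantees that the total weight is deep enough inside its $p^{n+1}$-alcove that the Frobenius-twisted tensoring with $\St_1$ produces no extra tilting summands. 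The main obstacle is the base case $n = N$ together with the precise alcove-theoretic argument that the bound exactly captures the onset of this stability; I expect this to require a careful analysis of how $\mu + p(p^N-1)\rho$ sits relative to the reflection hyperplanes of the affine Weyl group at level $p^{N+1}$.
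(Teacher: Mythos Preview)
Your reduction to a character identity between $G_1T$-projective modules is fine, but after that the argument has a genuine gap. You explicitly defer the base case $n=N$ as ``the main obstacle,'' and your proposed attack on it---an alcove-geometric analysis of where $\mu + p(p^N-1)\rho$ sits---is only a hope, not an argument; there is no standard alcove criterion that converts the inequality $\tfrac{p^{N+1}-1}{p-1}\ge h-1$ directly into the needed indecomposability. Moreover, your inductive step is not actually established either: to get $T(\mu + p(p^{n+1}-1)\rho) \cong T(\mu + p(p^n-1)\rho)\otimes \St_1^{(n+1)}$ you must show that $T(\mu + p(p^n-1)\rho)$ is indecomposable over $G_{n+1}$, and ``deep inside its $p^{n+1}$-alcove'' does not give this. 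So the induction pushes all the content into the base case while also leaving the step itself unjustified; in effect you have restated the theorem rather than proved it. (Note too that once the base case is proved by any method, induction is superfluous: the bound is monotone in $N$, so the same direct argument applies at every $n\ge N$.)

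The paper's proof is entirely different and does not use induction or alcove geometry. It works with $\St_n\otimes L(\lambda)$ (not with $\widehat{Q}_1(\lambda)$) and shows, via an $\Ext^1$-vanishing argument, that this module is injective in the truncated category of $G$-modules with highest weights $\le (p^n-1)\rho+\lambda$. The bound enters precisely here: after rewriting $\Ext^1_G(L(\mu),\St_n\otimes L(\lambda))\cong \Ext^1_G(\St_n, L(\mu)^*\otimes L(\lambda))$, the only possible nonvanishing comes from composition factors of the form $\St_n\otimes L(\gamma)^{(n)}$, and the inequality $p^n>(p-1)(h-1)$ forces $\langle\gamma,\alpha_0^\vee\rangle<2$, i.e.\ $\gamma$ is zero or minuscule, whence the $\Ext^1$ vanishes. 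A socle analysis over $G$, $G_n$, and $G_1$ then identifies the summand $T((p^n-1)\rho+\lambda)$ of $\St_n\otimes L(\lambda)$ simultaneously with $\widehat{Q}_n((p^n-1)\rho+w_0\lambda)$ and, restricting to $G_1T$, with $\widehat{Q}_1((p-1)\rho+w_0\lambda)\otimes \St_{n-1}^{(1)}$.
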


\begin{proof}
Let $N$ be fixed such that $p^{N+1}-1 \ge (p-1)(h-1)$, and set $n=N+1$.  Let $\la \in X_1(T)$, and suppose that $\mu \in X(T)_+$ and $\mu \le (p^n-1)\rho + \la$.  We have
$$\Ext_G^1(L(\mu),\St_n \otimes L(\la)) \cong \Ext_G^1(\St_n, L(\mu)^* \otimes L(\la))$$
by \cite[I.4.4]{rags} and the self-duality of $\St_n$.  Applying \cite[II.10.4]{rags}, the only composition factors of $L(\mu)^* \otimes L(\la)$ that can be extended by $\St_n$ must have the form $\St_n \otimes L(\gamma)^{(n)}$.  By highest weight considerations, since $\mu \le (p^n-1)\rho+\la$, any such $\gamma$ must satisfy
$$(p^n-1)\rho+p^n\gamma \le (p^n-1)\rho-w_0\la + \la$$
so
$$p^n \langle \gamma, \alpha_0^{\vee} \rangle = \langle p^n\gamma, \alpha_0^{\vee} \rangle \le \langle -w_0\la + \la, \alpha_0^{\vee} \rangle \le 2(p-1)(h-1).$$
Since $p^n > (p-1)(h-1)$, we have $\langle \gamma, \alpha_0^{\vee} \rangle < 2$, so that either $\gamma=0$ or $\gamma$ is minuscule.  In either case, $\Ext_G^1(\St_n, \St_n \otimes L(\gamma)^{(n)})=0$.

This proves that $\Ext_G^1(L(\mu),\St_n \otimes L(\la))=0$ for all $\mu \le (p^n-1)\rho+\la$, so that $\St_n \otimes L(\la)$ is an injective object in the truncated category $\mathcal{C}(\pi)$ of all $G$-modules whose composition factors have highest weight in the set
$$\pi = \{\sigma \mid \sigma \le (p^n-1)\rho + \la \} \subseteq X(T)_+.$$
Any indecomposable $G$-summand $M$ of $\St_n \otimes L(\la)$ is then an injective indecomposable object in $\mathcal{C}(\pi)$, and so has a simple $G$-socle (this holds in the general setting of highest weight categories \cite{CPS}, and can also be seen directly here as a consequence of \cite[A.1(4)]{rags}).  Further, the submodule $\text{soc}_{G_n}(M)$ is a direct sum of its isotypic components as a $G$-module (see \cite[II.3.16(1)]{rags}), and since each component has a nonzero $G$-socle, the simplicity of $\text{soc}_G(M)$ implies that there can only be one such component.  Now
$$\Hom_{G_n}(L((p^n-1)\rho+w_0\la),\St_n \otimes L(\la)) \cong \Hom_{G}(L((p^n-1)\rho+w_0\la),\St_n \otimes L(\la)) \cong k$$
by \cite[II.10.15]{rags}, therefore some $G$-summand of $\St_n \otimes L(\la)$ must have a simple $G_n$-socle that is isomorphic to $L((p^n-1)\rho+w_0\la)$.  This summand is projective over $G_nT$, so is isomorphic to $\widehat{Q}_n((p^n-1)\rho+w_0\la)$, and it contains the one-dimensional subspace of highest weight $(p^n-1)\rho + \la$ by \cite[Lemma II.11.6]{rags}.  By \cite{P}, $T((p^n-1)\rho + \la)$ is also a $G$-summand of $\St_n \otimes L(\la)$, so we conclude that $T((p^n-1)\rho + \la)$ is isomorphic to $\widehat{Q}_n((p^n-1)\rho+w_0\la)$ over $G_nT$.

Finally,
$$(p^n-1)\rho + w_0\la = [(p-1)\rho+w_0\la] + p(p^{n-1}-1)\rho,$$
so $L((p^n-1)\rho + w_0\la) \cong L((p-1)\rho+w_0\la) \otimes {\St_{n-1}}^{(1)}$, therefore
$$L((p-1)\rho+w_0\la) \otimes {\St_{n-1}}^{(1)} \subseteq \text{soc}_{G_1}(T((p^n-1)\rho + \la)).$$
Again appealing to \cite[II.3.16(1)]{rags}, the simplicity of the $G$-socle of $T((p^n-1)\rho + \la)$ implies that its $G_1$-socle consists of a single isotypic component.  Since the $G_1$-socle of $\St \otimes L(\la)$ has $L((p-1)\rho + w_0\la)$ occurring with multiplicity $1$ (by \cite[II.10.15]{rags}), then
$$\text{soc}_{G_1}(\St_n \otimes L(\la)) \cong \text{soc}_{G_1}(\St \otimes L(\la)) \otimes {\St_{n-1}}^{(1)}$$
has $L((p-1)\rho + w_0\la)$ occurring with multiplicity equal to the dimension of ${\St_{n-1}}^{(1)}$.  Thus
$$L((p-1)\rho+w_0\la) \otimes {\St_{n-1}}^{(1)} \cong \text{soc}_{G_1}(T((p^n-1)\rho + \la)).$$
We then have over $G_1T$ an isomorphism
\begin{align*}
T((p^n-1)\rho + \la) & = T((2(p-1)\rho + w_0((p-1)\rho+w_0\la) + p(p^{n-1}-1)\rho)\\ 
& \cong \widehat{Q}_1((p-1)\rho+w_0\la) \otimes {\St_{n-1}}^{(1)}.
\end{align*}
Recalling that $N=n-1$, and setting $\mu=(p-1)\rho+w_0\la \in X_1(T)$, this gives
$$T((2(p-1)\rho+w_0\mu) + p(p^N-1)\rho) \cong \widehat{Q}_1(\mu) \otimes {\St_N}^{(1)},$$
matching the statement in Theorem \ref{T:P-S}.
\end{proof}

\section{An explicit formula for simple characters}

Set $N=\left \lceil{\log_p(h-2)}\right \rceil$, where $h$ is the Coxeter number of $G$ and $\left \lceil{x}\right \rceil$ denotes the ceiling function (we assume that $h > 2$).  Given a character formula for tilting modules, one computes the characters of the modules
$$\{ T((2(p-1)\rho+w_0\lambda)+p(p^N-1)\rho), \; \lambda \in X_1(T)\}.$$
These tilting modules, being injective over $G_{N+1}$, have filtrations by baby Verma modules of the form $\widehat{Z}_{N+1}(\mu)$.  There are therefore coefficients $b_{\mu,\lambda} \ge 0$ such that in the Grothendieck group of $G_{N+1}T$-modules,
$$[T((2(p-1)\rho+w_0\lambda)+p(p^N-1)\rho)] = \sum b_{\mu,\lambda} [\widehat{Z}_{N+1}(\mu)].$$
Using Equation \ref{bVchar}, this is equivalent to the character equation
$$\textup{ch}(T((2(p-1)\rho+w_0\lambda)+p(p^N-1)\rho)) = \sum b_{\mu,\lambda} \,\chi((p^{N+1}-1)\rho)e(\mu-(p^{N+1}-1)\rho).$$
Since the characters on both sides are known, one can directly compute all $b_{\mu,\lambda}$.

There are also coefficients $a_{\mu,\lambda}$ such that in the Grothendieck group of $G_1T$-modules
$$[\widehat{Q}_1(\lambda)] = \sum a_{\mu,\lambda} [\widehat{Z}_1(\mu)].$$
By Theorems \ref{T:P-S} and \ref{T:bound}, the choice of $N$ guarantees that
$$T((2(p-1)\rho+w_0\lambda)+p(p^N-1)\rho) \cong \widehat{Q}_1(\lambda) \otimes {\St_N}^{(1)}.$$
Since
\begin{align*}
\text{ch}(\widehat{Z}_1(\mu))\text{ch}({\St_N}^{(1)}) & = \text{ch}(\widehat{Z}_{N+1}(\mu))e(p(p^N-1)\rho)\\
& =\text{ch}(\widehat{Z}_{N+1}(\mu+p(p^N-1)\rho)),
\end{align*}
it follows that
$$a_{\mu,\lambda}=b_{\mu+p(p^N-1)\rho,\lambda}.$$
In summary, given $\lambda \in X_1(T)$ and $\mu \in X(T)$, the multiplicity of $\widehat{Z}_1(\mu)$ in $\widehat{Q}_1(\lambda)$ is equal to $b_{\mu+p(p^N-1)\rho,\; \lambda}$, and the computation of latter is explicitly described above.

From here, the baby Verma multiplicities of all $G_1T$-PIMs are determined since
$$\text{ch}(\widehat{Q}_1(\lambda+p\mu))=\text{ch}(\widehat{Q}_1(\lambda))\cdot e(p\mu).$$
According to the reciprocity \cite[II.11.4]{rags} for $G_1T$-modules, 
$$a_{\mu,\lambda}=[\widehat{Z}_1(\mu):\widehat{L}_1(\la)].$$  These composition multiplicities determine the characters of the simple $G_1T$-modules, but one must use a careful argument since the baby Verma modules do not form a basis in the Grothendieck group of $G_1T$-modules (they only form a basis for the subgroup of all modules that are projective over $B_1$).  H. H. Andersen has communicated to us multiple arguments on how to make this last connection, including the following short proof sketch.

Assume that all the multiplicities $[\widehat{Z}_1(\la):\widehat{L}_1(\mu)]$ are known.  One finds the dimensions of the weight spaces $\widehat{L}_1(\lambda)_\nu$ by induction on $\text{ht}(\lambda - \nu)$ as follows:  First, $\widehat{L}_1(\lambda)_\lambda$ is 1-dimensional.  Assume now that we know the answer for all $\lambda, \nu$ with  $\text{ht}(\lambda - \nu) < n$ and consider $\lambda, \nu$ with $\text{ht}(\lambda -\nu) = n$. As 
$$\text{ch}(\widehat{L}_1(\lambda)) = \text{ch}(\widehat{Z}_1(\lambda)) - \sum_{\mu < \lambda} [\widehat{Z}_1(\lambda):\widehat{L}_1(\mu)] \text{ch}(\widehat{L}_1(\mu)),$$
we get
$$\dim \widehat{L}_1(\lambda)_\nu  = \dim \widehat{Z}_1(\lambda)_\nu - \sum_{\mu < \lambda} [\widehat{Z}_1(\lambda):\widehat{L}_1(\mu)] \dim \widehat{L}_1(\mu)_\nu.$$
The first term on the RHS is known and the other terms are known by induction (since $\mu < \la$ implies that $\text{ht}(\mu-\nu)<n$).  Note that all sums above are finite, and because of the identity $\text{ch}(\widehat{L}_1(\la + p\mu))=\text{ch}(\widehat{L}_1(\la))e(p\mu)$, the whole process is finite.

\section{Comparison with Ringel Duality}

In \cite[Proposition 3.10]{D}, Donkin uses Ringel duality (in a nontrivial way) to give an explicit relationship between tilting characters and simple characters of general linear groups.  This result was later generalized to special orthogonal and symplectic groups by work of Adamovich and Rybnikov in \cite{AR}.   Thus, for connected reductive groups of classical type (by which we mean that the components of the root system are all of types $A$-$D$), it was known that tilting characters determine simple characters (and vice versa even).  We should therefore clarify at this point the difference between these results and those found in this paper.

First, the method presented here works for all reductive groups, whereas it is not currently known how to generalize Donkin's method to include simple groups of exceptional type.  A second important distinction is that in these other papers the simple characters for a given connected reductive group $G$ of classical type are determined by the tilting characters of some {\em other} connected reductive group $H$ of classical type that is in general different from $G$.  This turns out to be especially significant when trying to apply the work in \cite{AMRW}, which requires that $p>h$, the Coxeter number of $G$.

A very short summary of Donkin's result can be found in \cite[E.10]{rags}.  From this, one can work out examples of a $GL_n$ and $p$, with $n<p$, for which the characters of simple $GL_n$-modules of restricted highest weight will be given, by \cite[Proposition 3.10]{D}, from characters of tilting modules for various $GL_m$ where $m$ is not always less than $p$.  For example, let $n=6$, $p=7$, and $\lambda=(15,11,9,4,2,0)$ (which is a restricted weight since $a_i-a_{i+1} < 7$).  The $GL_6$ composition factors of $\nabla(\lambda)$ will then be given by good filtration factors of a tilting module for $GL_{15}$.  In this sense, one cannot directly appeal to \cite[Proposition 3.10]{D} in order to show that a tilting character formula for $GL_n$ and $p>n$ implies a simple character formula for $GL_n$ under the same bound on $p$ (which is the main purpose of this article, for general connected reductive groups).

Nonetheless, the results in \cite{D} and \cite{AR} are powerful, and could ultimately hold the key to further unlocking the mystery of modular character formulas.  Moreover, it would be of great interest to know if an appropriate generalization of \cite[Proposition 3.10]{D} can be found for groups of exceptional type.

\providecommand{\bysame}{\leavevmode\hbox
to3em{\hrulefill}\thinspace}

\end{document}